\documentclass[twocolumn]{autart}  

\usepackage{amsmath,amssymb}       % Math symbols and environments
\usepackage{bm}                    % Bold math symbols (e.g., \bm{x})
\usepackage{enumitem}
\usepackage{algorithm}
\usepackage{algpseudocode}

\usepackage{etoolbox}
\makeatletter
\expandafter\patchcmd\csname tabular*\endcsname{\hfil}{\hfil\extracolsep{\fill}}{}{}
\makeatother

\usepackage{graphicx}              % Include figures
\usepackage{epstopdf}              % EPS to PDF conversion (if using .eps)
\usepackage{svg}
\usepackage{subfig}

\newtheorem{definition}{Definition}

\newtheorem{theorem}{Theorem}

\newtheorem{proposition}{Proposition}
\newtheorem{corollary}{Corollary}
\newtheorem{remark}{Remark}
\newtheorem{assumption}{Assumption}

\usepackage{color}                 % Color support (for text, not figures)
\usepackage{url}                   % For URLs in references

\begin{document}

\begin{frontmatter}
%\runtitle{Insert a suggested running title}  % Running title for regular 

\title{Density-Driven Optimal Control: Convergence Guarantees for Stochastic LTI Multi-Agent Systems}
\thanks[footnoteinfo]{This work was supported by NSF CAREER Grant CMMI-DCSD-2145810.}

\author[Paestum]{Kooktae Lee}\ead{kooktae.lee@nmt.edu}    % Add the 

\address[Paestum]{Department of Mechanical Engineering, New Mexico Institute of Mining and Technology, Socorro, NM 87801, USA }  % Please supply                                              

\begin{keyword}                           % Five to ten keywords,  
Multi-Agent Systems;
Stochastic Control;
Optimal Transport;
Wasserstein Distance;
Area Coverage.
% chosen from the IFAC 
\end{keyword}                             % keyword list or with the 
                                          % help of the Automatica 
                                          % keyword wizard

\begin{abstract}
This paper addresses the decentralized non-uniform area coverage problem for multi-agent systems, a critical task in missions with high spatial priority and resource constraints. While existing density-based methods often rely on computationally heavy Eulerian PDE solvers or heuristic planning, we propose Stochastic Density-Driven Optimal Control (D$^2$OC). This is a rigorous Lagrangian framework that bridges the gap between individual agent dynamics and collective distribution matching. By formulating a stochastic MPC-like problem that minimizes the Wasserstein distance as a running cost, our approach ensures that the time-averaged empirical distribution converges to a non-parametric target density under stochastic LTI dynamics. A key contribution is the formal convergence guarantee established via reachability analysis, providing a bounded tracking error even in the presence of process and measurement noise. Numerical results verify that Stochastic D$^2$OC achieves robust, decentralized coverage while outperforming previous heuristic methods in optimality and consistency.
\end{abstract}

\vspace{-.2in}

\end{frontmatter}

%%%%%%%%%%%%%%%%%%%%%%%%%%%%%%%%%%%%%%%%%%%%%%%%%%%%%%%%%%%
\section{Introduction}
\vspace{-0.116in}

The multi-agent area coverage problem has recently garnered significant attention due to its broad range of applications, including search and rescue, environmental monitoring, infrastructure inspection, smart farming, and planetary exploration. The central challenge lies in guiding a team of agents to maximize coverage performance within a given domain. While traditional strategies like the lawnmower path have been widely used for uniform coverage, these methods are often inefficient in large-scale environments and may become infeasible when resources such as operation time, agent number, and communication range are constrained. Consequently, non-uniform area coverage has emerged as a more practical alternative, especially under limited resources and mission-specific priorities.

{
Recent approaches to non-uniform area coverage aim to match agent behavior with a reference distribution encoding spatial priorities. Spectral Multiscale Coverage (SMC) \cite{mathew2009spectral,mathew2011metrics} leverages Fourier-based metrics to align long-term visitation frequencies based on ergodic control. However, ergodicity is only theoretically achieved as $t \to \infty$, making it ill-suited for missions with strict finite-time constraints. To address this, density-based optimal control frameworks have been explored. For instance, \cite{chen2016optimal} and \cite{terpin2024dynamic} study optimal transport over dynamical systems using Eulerian-based gradient flows or dynamic programming in probability spaces. While theoretically rigorous, these methods often rely on centralized computations of a global density flow and are primarily restricted to linear dynamics or simplified state spaces, which are computationally prohibitive for high-dimensional decentralized swarms. More recently, \cite{rapakoulias2025steering} employs mean-field Schr\"{o}dinger bridges with Gaussian Mixture Models (GMM) to steer agent populations. However, such approaches are limited by the parametric assumptions of GMMs and the high complexity of solving coupled PDEs, making them less flexible for non-parametric target densities.

Density-Driven Control (D$^2$C) has emerged as a practical Lagrangian-based alternative for matching empirical agent distributions with a target density \cite{kabir2021efficient,lee2022density}. By adopting a Lagrangian perspective, where the swarm is represented as a collection of discrete, identifiable particles rather than a continuous field, D$^2$C effectively circumvents the curse of dimensionality and the heavy computational burden associated with solving high-dimensional PDEs in Eulerian density control. Despite its computational efficiency and scalability, the reliance on heuristic planning often limits its performance and optimality, particularly in complex environments with high uncertainty. Specifically, existing D$^2$C frameworks frequently overlook the dynamic coupling between individual agent trajectories and the collective distribution matching objective under stochastic perturbations. This gap necessitates a unified decentralized framework capable of simultaneously accounting for strict physical operational constraints and the inherent stochasticity arising from process and measurement noises.

To bridge this gap, we propose the Stochastic D$^2$OC, a rigorous optimal control framework for non-uniform coverage in stochastic linear time-invariant (LTI) multi-agent systems. Unlike previous heuristic methods, our framework reformulates distribution matching within the Optimal Transport (OT) context by minimizing the Wasserstein distance as a running cost in a stochastic MPC-like formulation. This leads to an optimal control law that ensures the time-averaged spatial distribution converges to a reference density in a provably consistent manner, even under severe noise. Furthermore, by characterizing the reachable set of the agent dynamics, we provide a formal convergence analysis ensuring the empirical distribution remains within a bounded neighborhood of the target. Numerical simulations under aggressive noise and perturbations demonstrate that Stochastic D$^2$OC achieves robust, decentralized matching while strictly satisfying individual dynamics and physical constraints.
}

%%%%%%%%%%%%%%%%%%%%%%%%%%%%%%%%%%%%%%%%%%%%%%%%%%%%%%%%%%%

\section{Preliminaries}
\vspace{-0.116in}

{
\noindent \textbf{Notation:} Let \(\mathbb{R}\) and \(\mathbb{Z}\) be the sets of real and integer numbers, with \(\mathbb{Z}_{>0}\) and \(\mathbb{Z}_{\geq 0}\) denoting positive and non-negative integers, respectively. 
For a matrix \(A \in \mathbb{R}^{m \times n}\), its transpose is \(A^\top\). 
The Euclidean norm is \(\|\mathbf{x}\|\), and the trace is \(\operatorname{tr}(A)\). 
The identity and zero matrices of size \(n\) are \(\mathbf{I}_n\) and \(\mathbf{0}_n\). 
A Gaussian distribution with mean \(\mu\) and covariance \(\Sigma\) is \(\mathcal{N}(\mu, \Sigma)\), and \(\|U\|_R := \sqrt{U^\top R U}\) denotes the weighted norm for \(R \succ 0\). 
The operators \(\mathrm{diag}(\cdot)\) and \(\mathrm{blkdiag}(A_h)_{h=r}^{r+H-1}\) construct diagonal and block-diagonal matrices, respectively. 
Finally, \(\otimes\) and \(\odot\) represent the Kronecker and Hadamard products.
}

Consider a multi-agent system, where each agent is governed by discrete-time stochastic LTI dynamics, evolving over a discrete-time index \(k \in \mathbb{Z}_{\ge 0}\), as follows:
\begin{equation}
\begin{aligned}
x_i^{k+1} &= A_i x_i^k + B_i u_i^k + w_i^k, \quad w_i^k \sim \mathcal{N}(0,\Sigma_{i,w}), \\\label{eq:dyn}
y_i^k &= C_i x_i^k + v_i^k, \quad v_i^k \sim \mathcal{N}(0,\Sigma_{i,v}),
\end{aligned}
\end{equation}
where \(x_i^k \in \mathbb{R}^n\) is the state, \(u_i^k \in \mathbb{R}^m\) is the control input, and \(y_i^k \in \mathbb{R}^d\) is the output at time \(k\). The matrices \(A_i \in \mathbb{R}^{n \times n}\), \(B_i \in \mathbb{R}^{n \times m}\), and \(C_i \in \mathbb{R}^{d \times n}\) define the system dynamics, control influence, and observation model, respectively. The process noise \(w_i^k\) and measurement noise \(v_i^k\) are independent, zero-mean Gaussian random variables with covariances \(\Sigma_{i,w}\) and \(\Sigma_{i,v}\), capturing model uncertainty and sensing errors.

To ensure well-posedness of the control problems considered in this paper, we impose the following standard assumption on each agent's dynamics.

\medskip
\begin{assumption}\label{assump:controllability}
    For each agent \(i\), the pair \((A_i, B_i)\) is completely controllable.
\end{assumption}

\begin{assumption}\label{assump:stability}
    For each agent \(i\), \(A_i\) is at least marginally stable, i.e., all eigenvalues lie in the closed unit disk, and any eigenvalue on the unit circle is non-defective (i.e., the algebraic and geometric multiplicities are equal).
\end{assumption}

% \begin{assumption}\label{assump:observability}
%     For each agent $i$, the pair $(A_i, C_i)$ is completely observable.
% \end{assumption}

These assumptions ensure that each agent is fully controllable and that its open-loop dynamics are at least marginally stable, which, together with bounded noise covariance, forms the basis for the convergence analysis.

\subsection{Wasserstein Distance and the Kantorovich Optimal Transport Problem}
\vspace{-0.116in}
To develop the D$^2$OC strategy, we utilize optimal transport theory \cite{villani2008optimal}. The $p$-Wasserstein distance between two discrete measures $\rho$ and $\nu$ on a metric space $(\mathcal{X}, d)$ is defined via the Kantorovich problem:
\begin{equation}
\mathcal{W}_p(\rho, \nu) = \bigg( \min_{\pi_{ij} \geq 0} \sum_{i=1}^M \sum_{j=1}^N \pi_{ij} \, d(y_i, q_j)^p \bigg)^{\frac{1}{2}},
\label{eq:discrete_wasserstein}
\end{equation}
subject to $\sum_{j=1}^N \pi_{ij} = \alpha_i$, $\sum_{i=1}^M \pi_{ij} = \beta_j$, and $\sum_{i,j} \pi_{ij} = 1$, where $\pi_{ij}$ is the transport plan from agent points $y_i \in \mathbb{R}^d$ to fixed sample points $q_j \in \mathbb{R}^d$ with masses $\alpha_i, \beta_j$. We set $p=2$ and use Euclidean distance for $d(\cdot, \cdot)$.

To facilitate D$^2$OC, we distinguish between evolving agent trajectories $y_i^k$ and fixed reference locations $q_j$. Each of the $n_a$ agents generates $M_i$ points over a finite horizon, totaling $M = \sum M_i$ points. Each agent $i$ maintains its own capacity $\alpha_i^k = 1/M_i$ and locally estimates sample capacities $\beta_{i,j}^k$ (initially $1/N$). These capacities $\beta_{i,j}^k$ are updated locally to reflect coverage history, i.e., $\beta_{i,j}^k$ decreases as agent $i$ visits or passes near $q_j$, effectively tracking explored regions.

To evaluate global density-driven coverage, we define the time-averaged empirical distribution $\rho^k$ and the reference distribution $\nu$ as:
\begin{equation}
\rho^k := \frac{1}{(k+1)n_a} \sum_{t=0}^{k} \sum_{i=1}^{n_a} \delta_{y_i^{t}}, \quad \nu := \frac{1}{N} \sum_{j=1}^N \delta_{q_j},
\label{eq:agent_distribution}
\end{equation}
where $\delta_y$ is the Dirac measure at $y$. The objective of D$^2$OC is to coordinate agents to minimize the global Wasserstein distance $\mathcal{W}_2(\rho^k, \nu)$ within a finite operational time, achieving effective and robust decentralized non-uniform coverage.

\subsection{Three-stage D$^2$OC Overview}
\vspace{-0.116in}
Although the core methodology presented here differs fundamentally from prior works \cite{kabir2021efficient,kabir2021wildlife,lee2022density}, which lack guarantees on optimality or convergence, the overall structure for realizing D\textsuperscript{2}OC follows a similar high-level logic, summarized in the following three stages:

\begin{enumerate}[leftmargin=*, itemindent=4pt]
    \item \textbf{Local Target Sample Selection and Optimal Control}:  
    At each time step \(k\), each agent selects some sample points among the reference samples $\{q_j\}$ based on proximity and remaining weight (favoring less-visited points). 
    Each agent then computes a control input using the selected local target samples, to minimize the local Wasserstein distance, subject to dynamic and input constraints.

    \item \textbf{Weight Update}:  
    Next time step after moving, the agent solves a local Wasserstein distance to update sample weights, reducing those recently covered and encouraging coverage of under-explored areas.

    \item \textbf{Weight Sharing}:  
    Agents exchange sample weights within communication range, synchronizing coverage estimates via a min-weight consensus that selects the smallest weight for each sample point. This ensures agreement on maximal coverage progress, enhancing coordination and reducing redundant visits.
\end{enumerate}

The first two stages are executed independently by each agent without communication, while the final stage enables collaborative coverage via information exchange among neighbors. This iterative cycle guides agents to match the reference density over time within dynamic and communication constraints. Although this paper primarily focuses on the first stage and theoretical convergence analysis of D$^2$OC, further details about three-stage approach can be found in \cite{kabir2021efficient,kabir2021wildlife,lee2022density}.

%%%%%%%%%%%%%%%%%%%%%%%%%%%%%%%%%%%%%%%%%%%%%%%%%%%%%%%%%%%

\section{Optimal Control Minimizing Local Wasserstein Distance}
\vspace{-0.116in}

This section presents the optimal control law for the first stage of the three-stage D$^2$OC framework. Let \(\mathcal{S}_i^k \subseteq \{1, \dots, N\}\) be the index set of sample points assigned to agent \(i\), and \(\mathcal{Q}_i^k = \{q_j \in \mathbb{R}^d \mid j \in \mathcal{S}_i^k\}\) the corresponding local targets with capacity weights \(\beta_{i,j}^k \geq 0\). The selection of local targets is detailed later based on reachability analysis.

\medskip
\begin{definition}\label{def:output relative degree}
(Output Relative Degree in Stochastic Discrete-Time LTI Systems)
Consider the stochastic discrete-time LTI system \eqref{eq:dyn}. The \emph{output relative degree} \(r \in \mathbb{Z}_{>0}\) is the smallest positive integer such that
$C_i A_i^{r-1} B_i \neq 0$, and $C_i A_i^{\ell-1} B_i = 0, \, \forall \ell = 1, \ldots, r-1.
$
\end{definition}

This means the control input affects the output starting at step \(k+r\). Thus, the expected squared \textit{local Wasserstein distance} for agent $i$ is constructed from time \(k+r\) with the prediction horizon \(H \in \mathbb{Z}_{>0}\) as follows:
\vspace{-.3in}

{\small
\begin{align*}
\mathbb{E}\left[\sum_{h=r}^{H+r-1}(\mathcal{W}_i^{k+h})^2\right] := \sum_{h=r}^{H+r-1}\sum_{j \in \mathcal{S}_i^{k+h}} \pi_j^{k+h} \, \mathbb{E}\left[\|y_i^{k+h} - q_j\|^2\right],
%\label{eq:local_wasserstein_cost}
\end{align*}
}
\vspace{-.2in}

where \(\pi_j^{k+h}\) is the transport weight from agent \(i\) to local sample \(q_j\) given by the local optimal transport plan at time $k+h$. This captures stochastic system dynamics and expected transport cost from predicted agent positions to assigned targets.
Leveraging this property, we establish the following result.

{
\noindent \textbf{Relationship to \cite{lee2025connectivity}:} The following quadratic reformulations and optimal control laws (see Proposition~\ref{proposition:equiv} and Theorem~\ref{theorem:W_min_opt_con}) maintain notation consistency with \cite{lee2025connectivity} while providing a fundamental stochastic generalization. Unlike the deterministic study in \cite{lee2025connectivity}, explicitly incorporating noise statistics via the expectation operator is a critical prerequisite for the rigorous convergence analysis in Section~\ref{sec:convergence}.
}

\medskip
\begin{proposition}\label{proposition:equiv}
Let $\mathcal{S}_i^{k+h}$ denote the index set for the local sample points for agent $i$ at time $k$ and the optimal transport plan  \(\pi_j^{k+h} \geq 0\) be given for all \(j \in \mathcal{S}_i^{k+h}\) and for \(h = r, \ldots, H+r-1\). Define the weighted barycenter at time \(k+h\) as
$
\bar{q}_i^{k+h} := \frac{1}{\sum_{j \in \mathcal{S}_i^{k+h}} \pi_j^{k+h}}\sum_{j \in \mathcal{S}_i^{k+h}} \pi_j^{k+h} q_j.
$

Then, the following equality holds: 
\vspace{-.2in}

{\small
\begin{align}
&\mathbb{E} \bigg[ \sum_{h=r}^{H+r-1} \left(\mathcal{W}_i^{k+h}\right)^2 \bigg] 
= \mathbb{E} \left[ \left\| \boldsymbol{\Omega}_i^{k|r:H} (Y_i^{k|r:H} - \bar{Q}_i^{k|r:H}) \right\|^2 \right]\nonumber\\
& \quad+ \mathrm{const.}, \nonumber\\
&\text{where  }\,
Y_i^{k|r:H} := \begin{bmatrix}
(y_i^{k+r})^\top & \cdots & (y_i^{k+H+r-1})^\top
\end{bmatrix}^\top \in\mathbb{R}^{dH}, \nonumber\\
&\bar{Q}_i^{k|r:H} := \begin{bmatrix}
(\bar{q}_i^{k+r})^\top & \cdots & (\bar{q}_i^{k+H+r-1})^\top
\end{bmatrix}^\top\in\mathbb{R}^{dH},\nonumber\\
&\boldsymbol{\Omega}_i^{k|r:H} := 
\mathrm{blkdiag}\!\big(
\sqrt{\textstyle\sum_{j\in\mathcal{S}_i(k+h)}\!\pi_j(k+h)}\,\mathbf I_d
\big)_{h=r}^{r+H-1}.
\label{eq:Y,Q,Omega}
\end{align}
}
\end{proposition}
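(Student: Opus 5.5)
The plan is to reduce the identity to a per-time-step "completion of the square" (a weighted bias–variance, or parallel-axis, decomposition) and then assemble the blocks. Fix $h\in\{r,\dots,H+r-1\}$ and write $m^{k+h}:=\sum_{j\in\mathcal{S}_i^{k+h}}\pi_j^{k+h}$. We may assume $m^{k+h}>0$, which is implicit in the definition of $\bar q_i^{k+h}$. The transport weights $\pi_j^{k+h}$ and the sample points $q_j$ are treated as given and deterministic in the sense of the statement, so they pass through the expectation. The only random quantity is $y_i^{k+h}$, and we assume it has finite second moments; this holds because the dynamics \eqref{eq:dyn} are linear with Gaussian noise.

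For each $j$, insert the barycenter and expand:
\[
\|y_i^{k+h}-q_j\|^2=\|y_i^{k+h}-\bar q_i^{k+h}\|^2+2(y_i^{k+h}-\bar q_i^{k+h})^\top(\bar q_i^{k+h}-q_j)+\|\bar q_i^{k+h}-q_j\|^2 .
\]
Multiply by $\pi_j^{k+h}$ and sum over $j\in\mathcal{S}_i^{k+h}$. By the definition of $\bar q_i^{k+h}$ we have $\sum_j\pi_j^{k+h}(\bar q_i^{k+h}-q_j)=0$. This is the key step, and it makes the cross term vanish identically, pathwise and before any expectation is taken. What remains is
\[
\sum_j\pi_j^{k+h}\|y_i^{k+h}-q_j\|^2=m^{k+h}\|y_i^{k+h}-\bar q_i^{k+h}\|^2+c^{k+h},
\qquad
c^{k+h}:=\sum_j\pi_j^{k+h}\|\bar q_i^{k+h}-q_j\|^2 .
\]
Here $c^{k+h}$ does not depend on $y_i$ and is therefore independent of the control. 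Taking expectations, using linearity, and summing over $h$ then gives
\[
\sum_h\big(m^{k+h}\,\mathbb{E}\|y_i^{k+h}-\bar q_i^{k+h}\|^2+c^{k+h}\big).
\]

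Finally, rewrite the sum in stacked form. Since $m^{k+h}\|z\|^2=\|\sqrt{m^{k+h}}\,\mathbf I_d z\|^2$, and the squared Euclidean norm of a stacked vector equals the sum of the squared norms of its blocks, the block-diagonal matrix $\boldsymbol{\Omega}_i^{k|r:H}$ acting on $Y_i^{k|r:H}-\bar Q_i^{k|r:H}$ gives
\[
\sum_h m^{k+h}\|y_i^{k+h}-\bar q_i^{k+h}\|^2=\|\boldsymbol{\Omega}_i^{k|r:H}(Y_i^{k|r:H}-\bar Q_i^{k|r:H})\|^2 .
\]
The constant is then $\sum_h c^{k+h}$.

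I do not expect a serious obstacle. The only point that needs care is justifying that the barycenters and weights are deterministic with respect to the expectation, since the transport plan is supplied ahead of time as in the statement, so that the cross term and the constant are handled correctly. Even if they were random but independent of the control, the cross term would still vanish pathwise, and the constant would simply become an expectation that is still independent of the control.
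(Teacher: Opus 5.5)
Your proof is correct and follows the same route as the paper. Both use a per-step weighted parallel-axis decomposition $\sum_j \pi_j^{k+h}\|y_i^{k+h}-q_j\|^2 = \bigl(\sum_j\pi_j^{k+h}\bigr)\|y_i^{k+h}-\bar q_i^{k+h}\|^2 + C_h$, take expectations, and stack the blocks with $\boldsymbol{\Omega}_i^{k|r:H}$; you additionally make explicit why the cross term vanishes and why $C_h$ does not depend on the control, which the paper leaves implicit.
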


\begin{proof}
By expanding the quadratic term,
\vspace{-.25in}

{\small
\begin{align*}
&\mathbb{E} \Bigg[ \sum_{h=r}^{H+r-1} \left(\mathcal{W}_i^{k+h}\right)^2 \Bigg] = \mathbb{E} \Bigg[ \sum_{h=r}^{H+r-1} \sum_{j \in \mathcal{S}_i^{k+h}} \pi_j^{k+h}
\| y_i^{k+h} - q_j \|^2 \Bigg]\\
% \qquad\qquad\qquad
&= \mathbb{E} \left[ \sum_{h=r}^{H+r-1}
\left( \sum_j \pi_j^{k+h} \right) \| y_i^{k+h} - \bar{q}_i^{k+h} \|^2 + C_h \right]\\
&= \mathbb{E} \left[ \sum_{h=r}^{H+r-1}
\left( \sum_j \pi_j^{k+h} \right) \| y_i^{k+h} - \bar{q}_i^{k+h} \|^2 \right] + \text{const.}
\end{align*}
}
\vspace{-.2in}

where each \(C_h := \sum_j \pi_j^{k+h} \| q_j - \bar{q}_i^{k+h} \|^2\) is constant w.r.t. \(U_i^{k|H}\). Stacking the terms into vectors and using the diagonal matrix \(\boldsymbol{\Omega}_i^{k|r:H}\) yields the compact quadratic form.
\end{proof}

{
\begin{remark}[Local Transport Plan] 
The transport plan $\pi_j^{k+h}$ is an internal variable computed locally by agent $i$ to determine the optimal mass distribution to target samples $\{q_j\}$. Given a constant mass $\alpha_i = 1/M_i$ allocated at each step, the agent independently solves a matching problem to minimize the local Wasserstein distance. The existence and uniqueness of this optimal $\pi_j^{k+h}$ are guaranteed by the optimal matching strategy established in \cite[Prop. 1]{kabir2021wildlife}.
\end{remark}
}

{
The control objective is to minimize the expected squared Wasserstein cost along with a weighted input penalty:
\vspace{-.5in}

\begin{equation}
\min_{U_i^{k|H}} J(U_i^{k|H}) := \mathbb{E}\bigg[\sum_{h=r}^{H+r-1} (\mathcal{W}_i^{k+h})^2 \bigg] + \|U_i^{k|H}\|^2_R \label{eq:d2oc_optimization_prob}
\end{equation}
where $U_i^{k|H} \in \mathbb{R}^{mH}$ is the stacked input and $R \succ 0$ is a positive definite weighting matrix. To avoid the intractability of global Wasserstein minimization, we focus on the local distance $\mathcal{W}_i^{k+h}$. Based on Proposition~\ref{proposition:equiv}, the cost in \eqref{eq:d2oc_optimization_prob} is reformulated into a strictly convex quadratic form using the following matrix definitions:
\vspace{-.3in}

{\small
\begin{align}
\Theta_i &:= \begin{bmatrix}
C_i A_i^{r-1} B_i & \mathbf{0}_{dm} & \cdots & \mathbf{0}_{dm} \\
C_i A_i^{r} B_i & C_i A_i^{r-1} B_i & \cdots & \mathbf{0}_{dm} \\
\vdots & \vdots & \ddots & \vdots \\
C_i A_i^{r+H-2} B_i & \cdots & \cdots & C_i A_i^{r-1} B_i
\end{bmatrix}, \label{eq:Theta} \\
\Phi_i &:= [ (C_i A_i^r)^\top, \ldots, (C_i A_i^{r+H-1})^\top ]^\top. \label{eq:Phi}
\end{align}
}
\vspace{-.25in}

\noindent Specifically, for a reference barycenter $\bar{Q}_i^{k|r:H}$, the cost $J$ is equivalently expressed as 
\begin{equation}
\begin{split}
&J = (U_i^{k|H})^\top H_i U_i^{k|H} + 2 f_i^\top U_i^{k|H}, \\
&\text{where }
H_i = (\boldsymbol{\Omega}_i \Theta_i)^\top (\boldsymbol{\Omega}_i \Theta_i) + R, \\
&\qquad\quad f_i = (\boldsymbol{\Omega}_i \Theta_i)^\top \boldsymbol{\Omega}_i (\Phi_i \mathbb{E}[x_i^k] - \bar{Q}_i^{k|r:H}), 
\end{split} \label{eq:H_i and f_i}
\end{equation}
with $\boldsymbol{\Omega}_i := \boldsymbol{\Omega}_i^{k|r:H}$. This quadratic reformulation leads to the following optimal control law.
}

\medskip

\begin{theorem}[Strictly Convex Optimal Control Law] \label{theorem:W_min_opt_con}
{
Consider the stochastic discrete-time LTI system \eqref{eq:dyn} for agent $i$. For the cost function $J(U_i^{k|H})$ defined in \eqref{eq:d2oc_optimization_prob}, the unique unconstrained optimal control sequence $U_i^{k|H\star}$ that minimizes the local Wasserstein-based cost is given by
\begin{equation}
U_i^{k|H\star} = -H_i^{-1} f_i, \label{eq:unc_opt_sol}
\end{equation}
where $H_i$ and $f_i$ are as defined in \eqref{eq:H_i and f_i}.
}

If box constraints on the control input
$u_{\min} \leq u_i^{k+\tau} \leq u_{\max}, \quad \tau=0,\ldots,H-1,$
are imposed, then the constrained optimization becomes the strictly convex quadratic program:
\begin{align}
\min_{U_i^{k|H}} \quad & (U_i^{k|H})^\top H_i U_i^{k|H} + 2 f_i^\top U_i^{k|H}\label{eq:QP}
\end{align}
subject to $u_{\min}^{(H)} \leq U_i^{k|H} \leq u_{\max}^{(H)}$,
where $u_{\min}^{(H)} := \mathbf{1}_H \otimes u_{\min}, \, 
u_{\max}^{(H)} := \mathbf{1}_H \otimes u_{\max}$ with $\mathbf{1}_{H}\in\mathbb{R}^{H}$ is the all-ones vector.
The unique optimal solution \((U_i^{k|H})^*\) satisfies the Karush-Kuhn-Tucker (KKT) conditions:
\begin{align*}
&H_i (U_i^{k|H})^* + f_i + \lambda^+ - \lambda^- = 0, \quad \lambda^+, \lambda^- \geq 0,\\
&\lambda^+ \odot ((U_i^{k|H})^* - u_{\max}^{(H)}) = 0, \,
\lambda^- \odot (u_{\min}^{(H)} - (U_i^{k|H})^*) = 0.
\end{align*}
\end{theorem}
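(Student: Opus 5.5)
The plan is to first derive the quadratic form \eqref{eq:H_i and f_i} from Proposition~\ref{proposition:equiv}, and then read off both parts of the theorem from strict convexity.

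\textbf{Step 1: the predicted output stack.} Iterate \eqref{eq:dyn} and use Definition~\ref{def:output relative degree}. The terms $C_iA_i^{\ell-1}B_i$ with $\ell<r$ vanish, so
\[
Y_i^{k|r:H}=\Phi_i x_i^k+\Theta_i U_i^{k|H}+\Gamma_i W_i^k+V_i^k .
\]
Here $W_i^k$ stacks the process noises $w_i^{k},\dots,w_i^{k+H+r-2}$ and $V_i^k$ stacks the measurement noises. The matrix $\Gamma_i$ has blocks $C_iA_i^{h-1-s}$.

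The noise is zero-mean and independent of the control decision; $U_i^{k|H}$ is deterministic given the information at time $k$. Write
\[
e:=\Phi_i\mathbb{E}[x_i^k]-\bar Q_i^{k|r:H}.
\]
The bias--variance split then gives
\[
\mathbb{E}\big\|\boldsymbol{\Omega}_i(Y_i^{k|r:H}-\bar Q_i^{k|r:H})\big\|^2=\|\boldsymbol{\Omega}_i(\Theta_iU+e)\|^2+\operatorname{tr}\!\big(\boldsymbol{\Omega}_i^\top\boldsymbol{\Omega}_i\,\mathrm{Cov}(\Phi_ix_i^k+\Gamma_iW_i^k+V_i^k)\big).
\]
The trace term does not depend on $U$. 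Expanding the first term and adding $\|U\|_R^2$ gives $U^\top H_iU+2f_i^\top U$ plus a constant, with $H_i,f_i$ exactly as in \eqref{eq:H_i and f_i}.

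\textbf{Step 2: the unconstrained minimizer.} The matrix $(\boldsymbol{\Omega}_i\Theta_i)^\top(\boldsymbol{\Omega}_i\Theta_i)$ is positive semidefinite and $R\succ0$, so $H_i\succ0$. Hence $J$ is strictly convex and coercive, and $H_i$ is invertible. The first-order condition $2H_iU+2f_i=0$ gives the unique minimizer \eqref{eq:unc_opt_sol}.

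\textbf{Step 3: the box-constrained problem.} The feasible set is a nonempty, compact, convex box. By Weierstrass a minimizer of the strictly convex objective exists, and strict convexity makes it unique. The constraints are affine, so the linear constraint qualification holds and the KKT conditions are necessary. Convexity makes them sufficient. Write the Lagrangian with multipliers $\lambda^\pm\ge0$ for $U\le u_{\max}^{(H)}$ and $u_{\min}^{(H)}\le U$, and divide the stationarity condition by $2$ (the multipliers are rescaled accordingly). This yields the stated system with complementary slackness in Hadamard form.

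\textbf{Main obstacle.} The delicate point is Step 1, in justifying that the expectation only shifts the cost by a $U$-independent constant. This requires the input sequence to be chosen as a deterministic (open-loop) function of the information at time $k$, and the future noises to be zero-mean and independent of it. It also requires that only $\mathbb{E}[x_i^k]$ enters the linear term, which is why $f_i$ involves $\mathbb{E}[x_i^k]$ rather than the state itself. I would state this open-loop interpretation explicitly. The remaining steps are standard convex-QP facts.
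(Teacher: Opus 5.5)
Your proposal is correct and follows essentially the same route as the paper: split the expected cost from Proposition~\ref{proposition:equiv} into a $U$-independent covariance (trace) term plus a quadratic in $U$, get $H_i\succ 0$ from $R\succ 0$, and read off the unique unconstrained minimizer and the KKT characterization of the box-constrained QP from strict convexity. Your stacked-noise bookkeeping via $\Gamma_i W_i^k$ is more accurate than the paper's block-diagonal $\Sigma_{i,Y}$, which ignores accumulated process noise, though this only affects the constant; you also rightly avoid relying on controllability, which is not needed for $H_i\succ 0$.
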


\begin{proof}
From Proposition~\ref{proposition:equiv} with \(\mathbb{E}[Y_i^{k|r:H}] = \Theta_i U_i^{k|H} + \Phi_i \mathbb{E}[x_i^k]\), the Wasserstein cost term can be expanded as
\begin{align*}
&\mathbb{E}  \sum_{h=r}^{H+r-1} (\mathcal{W}_i^{k+h})^2 = \operatorname{tr}( \boldsymbol{\Omega}_i^{k|r:H} \Sigma_{i,Y} (\boldsymbol{\Omega}_i^{k|r:H})^\top ) \\
&+ \mathbb{E} \| \boldsymbol{\Omega}_i^{k|r:H} (\Theta_i U_i^{k|H} + \Phi_i x_i^k - \bar{Q}_i^{k|r:H}) \|^2 + \text{const},
\end{align*}
where \(\Sigma_{i,Y} := \mathrm{diag}([C_i \Sigma_{i,w} C_i^\top + \Sigma_{i,v}, \dots, C_i \Sigma_{i,w} C_i^\top + \Sigma_{i,v}])\in\mathbb{R}^{dH\times dH}\) is the block-diagonal output noise covariance over the prediction window.
Since the trace and constant terms are independent of \(U_i^{k|H}\), it is omitted from the optimization. 

Including the weighted input penalty term \(\|U_i^{k|H}\|_R^2\), the cost becomes
$
J(U_i^{k|H}) = (U_i^{k|H})^\top H_i U_i^{k|H} + 2 f_i^\top U_i^{k|H},
$
where \(H_i\) and \(f_i\) are defined in \eqref{eq:H_i and f_i}.

By Assumption~\ref{assump:controllability}, the pair \((A_i, B_i)\) is controllable, which implies that the matrix \(\Theta_i\) has full column rank or sufficient rank properties ensuring that \((\boldsymbol{\Omega}_i^{k|r:H} \Theta_i)^\top (\boldsymbol{\Omega}_i^{k|r:H} \Theta_i)\) is positive semidefinite. Adding \(R \succ 0\) then guarantees that
$
H_i \succ 0.
$

Consequently, the cost function is strictly convex and admits a unique global minimizer obtained from the first-order condition
$
\nabla_{U_i^{k|H}} J = 2 H_i U_i^{k|H} + 2 f_i = 0,
$
which yields the unconstrained solution in \eqref{eq:unc_opt_sol}.

When box constraints are included, the problem becomes a strictly convex quadratic program. The KKT conditions are both necessary and sufficient for optimality and yield the unique constrained solution \((U_i^{k|H})^*\).
\end{proof}

\begin{corollary}(Optimal Solution with Per-Step Ball Input Constraints \(\|u_i^{k+\tau}\| \leq u_{\max}\))
Consider the optimization problem \eqref{eq:d2oc_optimization_prob} over a finite horizon \(H\),
subject to per-step Euclidean norm constraints
$\|u_i^{k+\tau}\| \leq u_{\max}, \quad \forall \tau = 0, \ldots, H-1,
$
where the system evolves according to \eqref{eq:dyn}.

Then, the optimal solution \((U_i^{k|H})^*\) is obtained by projecting each block \(u_i^{k+\tau, \mathrm{uncon}}\) of the unconstrained solution \((U_i^{k|H})^{\mathrm{uncon}}\) onto the Euclidean ball of radius \(u_{\max}\):
\vspace{-0.25in}

{\small
\[
u_i^{k+\tau,*} = \min\left(1, \frac{u_{\max}}{\|u_i^{k+\tau,\mathrm{uncon}}\|}\right) u_i^{k+\tau,\mathrm{uncon}},
\]
}
where \(u_i^{k+\tau, \mathrm{uncon}}\) is the \((k+\tau)\)-th \(m\)-dimensional block of the unconstrained solution defined in \eqref{eq:unc_opt_sol}.
\end{corollary}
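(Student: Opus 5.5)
The plan is to rewrite the cost of \eqref{eq:d2oc_optimization_prob} as a distance to the unconstrained minimizer, then show that this distance splits over the per-step blocks, and finally solve each block's ball-constrained problem in closed form. The splitting step is where the argument can fail, and I expect it to need an extra structural hypothesis on $H_i$.

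\textbf{Completing the square.} By Theorem~\ref{theorem:W_min_opt_con}, once noise-independent and constant terms are dropped, the cost is $J(U)=U^\top H_i U+2f_i^\top U$ with $H_i\succ 0$. Setting $U^{\mathrm{uncon}}:=-H_i^{-1}f_i$ as in \eqref{eq:unc_opt_sol}, we get
\[
J(U)=(U-U^{\mathrm{uncon}})^\top H_i\,(U-U^{\mathrm{uncon}})-f_i^\top H_i^{-1}f_i .
\]
So the constrained problem is exactly the projection of $U^{\mathrm{uncon}}$, in the $H_i$-weighted norm, onto the feasible set $\mathcal{B}:=\{U:\|u^{k+\tau}\|\le u_{\max},\ \tau=0,\dots,H-1\}$. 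Since $\mathcal{B}$ is a Cartesian product of closed convex balls and $H_i\succ0$, the minimizer exists and is unique.

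\textbf{Decoupling.} If $H_i$ is block diagonal with blocks $c_\tau\mathbf I_m$, $c_\tau>0$, then
\[
J(U)+\text{const}=\sum_{\tau}c_\tau\|u^{k+\tau}-u^{k+\tau,\mathrm{uncon}}\|^2 ,
\]
and the problem separates into $H$ independent Euclidean projections onto a ball. For a single block, the KKT conditions $c_\tau(u-u^{\mathrm{uncon}})+\mu u=0$, $\mu\ge0$, $\mu(\|u\|-u_{\max})=0$ give the answer directly:
\begin{itemize}
\item if $\|u^{\mathrm{uncon}}\|\le u_{\max}$, take $\mu=0$ and $u=u^{\mathrm{uncon}}$;
\item otherwise $u=\tfrac{c_\tau}{c_\tau+\mu}u^{\mathrm{uncon}}$ with $\|u\|=u_{\max}$, which yields the scaling factor $u_{\max}/\|u^{\mathrm{uncon}}\|$.
\end{itemize}
Both cases combine into the stated $\min(1,\cdot)$ formula. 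Because the problem is strictly convex, the KKT conditions are sufficient, so this candidate is the unique optimum.

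\textbf{Main obstacle.} The difficulty is the decoupling itself. In general $H_i=(\boldsymbol{\Omega}_i\Theta_i)^\top(\boldsymbol{\Omega}_i\Theta_i)+R$ is not block diagonal, because $\Theta_i$ in \eqref{eq:Theta} is lower block-triangular and couples inputs across time. A weighted projection onto a product set is then not the blockwise Euclidean projection. I would first check whether the statement is intended for the case $H=1$ or when $C_iA_i^{r-1}B_i$ has orthogonal columns of equal norm and $R=\rho\mathbf I$, since then $H_i\propto\mathbf I_m$ and the argument above goes through. Failing that, I would state the corollary under the explicit hypothesis $H_i=\mathrm{blkdiag}(c_\tau\mathbf I_m)$. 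Without that hypothesis, blockwise projection is only a feasible, suboptimal approximation; a $2\times2$ example with an off-diagonal $H_i$ shows it can differ from the true minimizer. So the proof needs either that hypothesis or a reinterpretation of ``optimal'' as optimal among per-step rescalings.
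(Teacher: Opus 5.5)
Your diagnosis is correct: the obstacle you flag is a flaw in the corollary itself, not in your argument. The paper gives no proof beyond the sentence that the result ``leverages the separable nature of the per-step Euclidean norm constraints.'' Your completion of the square shows why that is not enough. The problem is the $H_i$-weighted projection of $U^{\mathrm{uncon}}$ onto $\mathcal B$, whose KKT system is $\bigl(H_i+\mathrm{blkdiag}(\mu_\tau\mathbf I_m)\bigr)U^*=H_iU^{\mathrm{uncon}}$. This gives a blockwise radial rescaling only under extra structure such as $H_i=\mathrm{blkdiag}(c_\tau\mathbf I_m)$. Under that hypothesis your decoupling and per-block KKT verification are complete and correct, so your restricted statement is the right one to prove.

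There are two refinements. First, the failure does not come only from the temporal coupling through the lower block-triangular $\Theta_i$. It already occurs at $H=1$, the horizon used in the paper's simulations, and a diagonal but non-scalar $H_i$ is enough to cause it. Take $m=d=2$, $R=\mathbf I_2$ and $\boldsymbol{\Omega}_i\Theta_i=\mathrm{diag}(1,\sqrt{199})$, so $H_i=\mathrm{diag}(2,200)$. Since $\boldsymbol{\Omega}_i\Theta_i$ is invertible, $\bar Q_i^{k|r:H}$ can be chosen so that $u^{\mathrm{uncon}}=(1,1)^\top$. With $u_{\max}=1$ the formula returns $(1,1)^\top/\sqrt2$, for which $(u-u^{\mathrm{uncon}})^\top H_i(u-u^{\mathrm{uncon}})=202(1-1/\sqrt2)^2\approx 17.3$. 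The feasible point $(0,1)^\top$ gives only $2$. So your sufficient condition must be the conjunction ``$H=1$ and $H_i\propto\mathbf I_m$'', not ``$H=1$ or \dots''. In fact, for $H=1$ the relation $(H_i+\mu\mathbf I_m)u^*=H_iu^{\mathrm{uncon}}$ shows that the formula is optimal exactly when the constraint is inactive or $u^{\mathrm{uncon}}$ is an eigenvector of $H_i$.

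Second, your fallback reading, optimality among per-step rescalings, does not hold either. Along $u^{k+\tau}=s_\tau u^{k+\tau,\mathrm{uncon}}$ the cost is $(s-\mathbf 1_H)^\top G(s-\mathbf 1_H)$, where $G_{\tau\tau'}=(u^{k+\tau,\mathrm{uncon}})^\top[H_i]_{\tau\tau'}u^{k+\tau',\mathrm{uncon}}$ and $[H_i]_{\tau\tau'}$ is the $(\tau,\tau')$ block of $H_i$. Suppose exactly one block $\tau$ is clipped while a block $\tau'$ lying strictly inside its ball has $G_{\tau'\tau}\neq0$. Then the partial derivative of the cost with respect to $s_{\tau'}$ at the clipped point is $2G_{\tau'\tau}(s_\tau-1)\neq 0$, so the clipped point can be improved.

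Hence only the added-hypothesis route is sound. Without it the problem is a genuine second-order cone program; for $H=1$ it is a trust-region subproblem that needs a scalar root-find for $\mu$. The paper's claim of a closed form ``without the need for iterative solvers'' therefore also rests on that hypothesis.
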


This result leverages the separable nature of the per-step Euclidean norm constraints, enabling a closed-form projection of the unconstrained solution without the need for iterative solvers.

The proposed density-driven optimal control law is implemented via a model predictive control (MPC) scheme. At each time step, an optimal control sequence over a finite horizon \(H\) is computed, with only the first input applied before re-optimizing at the next step using updated agent states and local sample information.

\medskip
\begin{remark}
To track the reference distribution, each agent updates its local target samples and corresponding barycenter at every step. However, frequent updates can induce oscillations due to shifting targets. This is mitigated by the receding-horizon structure, which tempers responsiveness, and by an input regularization term that smooths motion while preserving adaptability to local changes.
\end{remark}

Since the effectiveness of D$^2$OC depends critically on the choice of local target samples and their barycenter, the next section presents a detailed analysis based on the mean reachable set.
%%%%%%%%%%%%%%%%%%%%%%%%%%%%%%%%%%%%%%%%%%%%%%%%%%%%%%%%%%%%%
\section{Reachability-Aware Target Sample Selection and Adjustment}
\vspace{-0.116in}

For systems with output relative degree \(r\), control inputs do not affect the output until time \(k + r\). Thus, mean reachability analysis over \(h = r, \ldots, H + r - 1\) is crucial for selecting local target points and computing their barycenter. This section presents a framework for constructing mean reachable output sets, selecting target samples accordingly, and computing a reachable barycenter that reflects future agent capabilities. An adjustment procedure is also provided for cases where the barycenter lies outside the reachable set.

\subsection{\(h\)-Step Reachable Set Analysis}
\vspace{-0.116in}

Given the current state \(x_i^k\) at time \(k\), and assuming an output relative degree \(r\), we consider the \emph{stochastic \(h\)-step reachable set} for \(h \in \{r, \ldots, H + r - 1\}\) at confidence level \(\alpha\), defined in \cite{fiacchini2021probabilistic} as
\begin{equation}
\begin{aligned}
\mathcal{R}_i^{k+h}(\alpha) &:= \{\, x \in \mathbb{R}^n \mid \exists \{u_i^k, \dots, u_i^{k+h-1}\} \subseteq \mathcal{U}, \\
& (x - \mu_i^{k+h})^\top \Sigma_{i,w,h}^{-1} (x - \mu_i^{k+h}) \leq \chi^2_{n,\alpha} \, \},
\end{aligned}
\end{equation}
where \(\mu_i^{k+h} := \mathbb{E}[x_i^{k+h}] = A_i^h \mu_i^k + \sum_{\tau=0}^{h-1} A_i^{h-1-\tau} B_i u_i^{k+\tau}\) is the mean \(h\)-step state prediction under nominal dynamics, \(\Sigma_{i,w,h}\) is the aggregated process noise covariance over \(h\) steps, and \(\chi^2_{n,\alpha}\) denotes the chi-squared quantile at confidence level \(\alpha\) in dimension \(n\). The control set \(\mathcal{U} \subset \mathbb{R}^{mh}\) is assumed convex and compact.

The corresponding \emph{mean reachable state set} at time \(k+h\) is defined as
\vspace{-0.35in}

{\small
\begin{equation}
\mathcal{M}_i^{k+h} := \left\{ \mu_i^{k+h} = A_i^h \mu_i^k + \sum_{\tau=0}^{h-1} A_i^{h-1-\tau} B_i u_i^{k+\tau} \mid u_i^{k+\tau} \in \mathcal{U} \right\}.
\label{eq:reachable output set}
\end{equation}
}
Under Assumption~\ref{assump:controllability}, this set spans a rich subset of the state space consistent with input constraints, enabling meaningful target selection and barycenter computation. Thus, \(\mathcal{R}_i^{k+h}(\alpha)\) can be viewed as a union of confidence ellipsoids centered at points in \(\mathcal{M}_i^{k+h}\), capturing control variability and stochastic disturbances over \(h\) steps.

\subsection{Local Sample Selection and MPC Implementation}
\vspace{-0.116in}

Due to the output relative degree \(r\), the reachable output set remains a singleton for the first \(r-1\) steps, determined solely by the open-loop propagation \(\mathbb{E}[y_i^{k+h}] = C_i A_i^h \mu_i^k\) for all \(h < r\). As a result, the reachability and sample selection become meaningful only from time step \(k + r\) onward.

{To resolve the indeterminacy between the future agent trajectory and the transport plan,} let \(A_i^h \mu_i^k\) denote the {fixed} open-loop nominal prediction of the state at time \(k+h\), \(h=r,\ldots,H+r-1\), obtained by propagating the current state mean \(\mu_i^k\) forward \(h\) steps with zero input. {By treating these nominal predictions as deterministic reference points,} we propose selecting local target points \(\mathcal{Q}_i^{k+h} := \{q_j\}_{j \in \mathcal{S}_i^{k+h}}\) centered around this nominal prediction, so that their barycenter \(\bar{q}_i^{k+h}\) lies close to or within the \(h\)-step mean reachable output set \(C_i\mathcal{M}_i^{k+h}\).

To this end, we adopt the following local optimal transport-based local sample selection strategy:
\begin{align*}
\min_{q_j,\, \pi_j^{k+h}} \sum_{j \in \mathcal{S}_i^{k+h}} \pi_j^{k+h} \| A_i^h \mu_i^k - q_j \|^2
\end{align*}
\vspace{-0.25in}

{subject to \(\sum_{j \in \mathcal{S}_i^{k+h}} \pi_j^{k+h} = \alpha_i^{k+h} := \frac{1}{M_i}\), where \(\alpha_i^{k+h}\) is the constant mass {allocated to} agent \(i\) at each step, and \(\pi_j^{k+h}\) is the transport plan. Since \(A_i^h \mu_i^k\) is fixed, an analytic solution exists that allocates mass to nearest neighbors \(q_j\) sequentially, respecting sample capacities \(\beta_{i,j}^k\), until the transported mass reaches \(\frac{1}{M_i}\) as in \cite{kabir2021wildlife}.
}

According to Proposition~\ref{proposition:equiv}, the Wasserstein cost is minimized when the expected outputs coincide with the barycenter \(\bar{q}_i^{k+h}\) for \(h \in \{r, \ldots, H + r - 1\}\). This requires the barycenter to lie within the mean reachable output set \(C_i \mathcal{M}_i^{k+h}\), which may not always hold. In the following, we thus discuss how to handle cases where \(\bar{q}_i^{k+h}\) lies outside \(C_i \mathcal{M}_i^{k+h}\) to ensure feasibility.

\subsection{Reachable Barycenter Approximation under MPC}
\vspace{-0.116in}
\subsubsection{Projection onto Feasible Output Set}
\vspace{-0.116in}

If the selected barycenter \(\bar{q}_i^{k+h}\) lies outside the mean reachable output set \(C_i \mathcal{M}_i^{k+h}\), we project it onto the closest feasible output:
\begin{equation}
\tilde{q}_i^{k+h} := \operatorname*{argmin}_{z \in C_i\mathcal{M}_i^{k+h}} \| z - \bar{q}_i^{k+h} \|^2.\label{eq:q_proj}
\end{equation}
This projection ensures that there exists a feasible control sequence that reaches \(\tilde{q}_i^{k+h}\), acknowledging MPC's receding horizon limitation.

\subsubsection{Sample Selection with Soft Constraint}
\vspace{-0.116in}
Alternatively, we incorporate feasibility into sample selection via a soft constraint. Denoting \(\bar{q}_i^{k+h}\) as the empirical mean of selected target points \(\{q_j\}\), we solve:
\vspace{-.25in}
{\small
\begin{align*}
\min_{\{u_i^{k+\tau}\}, \{q_j\}} \, & \sum_{j \in \mathcal{S}_i^{k+h}} \pi_j^{k+h}\|q_j - \bar{q}_i^{k+h}\|^2 + \lambda \left\| \bar{q}_i^{k+h} - \hat{q}_i^{k+h} \right\|^2 \\
\text{s.t.} \quad  q_j &\in C_i \mathcal{M}_i^{k+h}, \,\, \bar{q}_i^{k+h} := \frac{ \sum_{j \in \mathcal{S}_i^{k+h}} \pi_j^{k+h} q_j }{ \sum_{j \in \mathcal{S}_i^{k+h}} \pi_j^{k+h} }, \\
\hat{q}_i^{k+h} &:= C_i \Big( A_i^h x_i^k + \sum_{\tau=0}^{h-1} A_i^{h-1-\tau} B_i u_i^{k+\tau} \Big),
\end{align*}
}
\vspace{-.35in}

where \(\lambda > 0\) controls the trade-off between proximity to the desired barycenter and feasibility. {Integrating the reachability-aware target selection and adjustment procedures discussed in this section, the complete three-stage $D^2$OC framework is summarized in Algorithm~\ref{alg:d2oc}.}

\begin{algorithm}[h]
{
\footnotesize
\caption{Decentralized $D^2$OC with Reachability-Aware MPC}
\label{alg:d2oc}
\begin{algorithmic}[1]
\State \textbf{Initialize:} Each agent $i$ sets $\mu_i^0$ and initial coverage weights $\beta_{i,j}^0$.
\For{each time step $k = 0, 1, 2, \dots$}
    \State \textbf{Step 1: Reachability-Aware Target Selection and MPC (Sec. 3 \& 4)}
    \For{$h = r$ to $H+r-1$} \Comment{Receding horizon $H$-step analysis}
        \State Compute nominal prediction $A_i^h \mu_i^k$ and identify local samples $\mathcal{Q}_i^{k+h}$.
        \State Obtain barycenter $\bar{q}_i^{k+h}$ via OT; apply projection \eqref{eq:q_proj} or soft constraint.
    \EndFor
    \State Compute the optimal control $u_i^k$ by solving \eqref{eq:d2oc_optimization_prob}
    \State Apply $u_i^k$ and update state mean $\mu_i^{k+1}$.
    
    \State \textbf{Step 2: Weight Update}
    \State Solve local Wasserstein distance to update sample weights $\beta_{i,j}^{k+1}$.
    \State Reduce weights of covered samples to prioritize under-explored areas.
    
    \State \textbf{Step 3: Weight Sharing and Consensus}
    \State Exchange $\beta_{i,j}^{k+1}$ with neighbors within communication range.
    \State Synchronize via min-weight consensus: $\beta_{i,j} \leftarrow \min(\beta_{i,j}, \beta_{neighbor,j})$.
\EndFor
\end{algorithmic}
}
\end{algorithm}

%%%%%%%%%%%%%%%%%%%%%%%%%%%%%%%%%%%%%%%%%%%%%%%%%%%%%%%%%%

\section{Convergence Analysis}\label{sec:convergence}
\vspace{-0.116in}

Now the convergence behavior of the proposed control strategy is analyzed under this section. The main result shows that the global output distribution converges toward the target in the mean squared Wasserstein sense with a certain bound.

\medskip
\begin{theorem}
\label{thm:convergence_upper}
(Convergence of D$^2$OC with MPC and Decentralized Barycenter Updates)

Consider a multi-agent system with discrete-time stochastic linear dynamics \eqref{eq:dyn} and decentralized, local communication. Let the output relative degree be \(r\), and define the reachable output barycenter set at time \(k+h\), for \(h = r, \ldots, H + r - 1\), as \(C_i \mathcal{M}_i^{k+h}\), where \(\mathcal{M}_i^{k+h}\) is given by \eqref{eq:reachable output set}, \(\mu_i^k = \mathbb{E}[x_i^k]\), and \(\mathcal{U}\) is a compact input set.

At each \(k\), an MPC solves the finite-horizon problem \eqref{eq:d2oc_optimization_prob} over horizon \(H\) to minimize deviation between predicted outputs and target barycenters \(\{\bar{q}_i^{k+h}\}_{h=r}^{H+r-1}\), applying only the first input \(u_i^k\) before re-optimization.

Each agent determines these barycenters from local sample points with weights \(\{\beta_{i,j}^k\}\), updated by (i) local coverage and (ii) decentralized communication through pointwise minimum of weights between agents within communication range.

Define the empirical multi-agent output distribution \(\rho^k\) and let \(\nu\) be the target distribution as shown in \eqref{eq:agent_distribution}. Then, under decentralized D$^2$OC with local weight sharing, 
\begin{align}
\mathbb{E}[\mathcal{W}_2^2(\rho^{k+1}, \nu)] \leq \left(1 - \frac{c}{k+1}\right) \mathbb{E}[\mathcal{W}_2^2(\rho^k, \nu)] + \frac{C}{(k+1)^2},\label{eq:convergence}
\end{align}
for constants \(c, \, C > 0\), where \(c\) depends on controllability and convergence rate, and \(C\) bounds projection and noise effects.

Consequently,
$
\lim_{k \to \infty} \mathbb{E}[\mathcal{W}_2^2(\rho^k, \nu)] \leq \epsilon_h,
$
for some \(\epsilon_h > 0\) determined by communication frequency, noise, and projection errors.
\end{theorem}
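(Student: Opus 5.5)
The plan is to exploit the time-averaged structure of $\rho^k$ from \eqref{eq:agent_distribution}, which satisfies the recursion
\[
\rho^{k+1} = \tfrac{k+1}{k+2}\,\rho^k + \tfrac{1}{k+2}\,\sigma^{k+1}, \qquad \sigma^{k+1} := \tfrac{1}{n_a}\textstyle\sum_{i=1}^{n_a}\delta_{y_i^{k+1}},
\]
together with a mixture (convexity) inequality for the squared Wasserstein distance. The first ingredient is a coupling construction. Let $\pi^k$ be an optimal plan between $\rho^k$ and $\nu$. Let $\gamma^{k+1}$ be the plan generated by the local transport plans $\pi_j^{k+1}$, i.e., the plans that assign each new output $y_i^{k+1}$ to samples of $\nu$ whose residual capacities $\beta_{i,j}^k$ remain positive. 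Then $\tfrac{k+1}{k+2}\pi^k+\tfrac{1}{k+2}\gamma^{k+1}$ is a coupling of $\rho^{k+1}$ with a measure whose second marginal agrees with $\nu$ up to a capacity mismatch. Joint convexity of $\mathcal{W}_2^2$ under mixtures, plus a correction term for this mismatch, then gives
\[
\mathcal{W}_2^2(\rho^{k+1},\nu)\le \tfrac{k+1}{k+2}\,\mathcal{W}_2^2(\rho^k,\nu)+\tfrac{1}{k+2}\,\mathcal{D}^{k+1} + e^{k+1},
\]
where $\mathcal{D}^{k+1}:=\frac{1}{n_a}\sum_i\sum_j \pi_j^{k+1}\|y_i^{k+1}-q_j\|^2$ is the aggregated local Wasserstein cost and $e^{k+1}$ accounts for inconsistency among the decentralized weights.

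Next I bound $\mathbb{E}[\mathcal{D}^{k+1}]$. Proposition~\ref{proposition:equiv} and the expansion in the proof of Theorem~\ref{theorem:W_min_opt_con} split it into three parts. The first is the intrinsic spread $C_h$ of the local samples around $\bar q_i^{k+1}$. The second is $\operatorname{tr}(C_i\Sigma_{i,w}C_i^\top+\Sigma_{i,v})$, which is bounded by the covariance bounds together with Assumption~\ref{assump:stability} (marginal stability with non-defective unit-circle eigenvalues keeps the aggregated covariances uniformly bounded over the horizon). The third is the mean tracking error $\|C_i\mu_i^{k+1}-\bar q_i^{k+1}\|^2$. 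By Assumption~\ref{assump:controllability}, the projection \eqref{eq:q_proj}, and the optimality of the MPC input (Theorem~\ref{theorem:W_min_opt_con}), this error is at most the squared distance from $\bar q_i^{k+1}$ to $C_i\mathcal{M}_i^{k+1}$ plus a regularization term of order $\|R\|$. Compactness of $\mathcal{U}$ and of the sample support makes all of these uniformly bounded by a constant $\bar D$.

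To convert the $\tfrac{k+1}{k+2}$ factor into a genuine contraction $1-\frac{c}{k+1}$, I will argue that capacity-driven selection forces the new increment to point toward mass that $\rho^k$ is missing. Write $\nu=\rho^k_{\text{covered}}+(\nu-\text{covered})$. Because the weights $\beta$ record residual mass, the local plans transport $\sigma^{k+1}$ toward the residual measure. Splitting $\mathcal{D}^{k+1}$ into the part relative to the residual and the part relative to $\nu$ should then yield
\[
\tfrac{1}{k+2}\mathbb{E}\mathcal{D}^{k+1}\le \tfrac{1-c}{k+2}\,\mathbb{E}\mathcal{W}_2^2(\rho^k,\nu)+O\big((k+1)^{-2}\big),
\]
with $c$ reflecting controllability, i.e., how much of the residual the reachable set $C_i\mathcal{M}_i^{k+1}$ can capture. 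The weight-sharing error $e^{k+1}$ is handled by noting that min-consensus only lowers weights and that disagreement is of order one new point per step, i.e., mass $\frac{1}{(k+2)n_a}$, which contributes $O((k+1)^{-2})$ after squaring distances over a bounded diameter. Combining these gives \eqref{eq:convergence}.

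For the limit, I apply the standard Chung-type lemma for sequences satisfying $a_{k+1}\le(1-\frac{c}{k+1})a_k+\frac{C}{(k+1)^2}$. This yields $a_k=O(1/k)$ when $c>1$, and $O(k^{-c}\log k)$ or a similar rate otherwise. Because the bounds involve non-vanishing projection and noise floors, I will retain a persistent residual: the terms that do not decay (the projection distance, the noise trace, and the inter-sample spread) collect into $\epsilon_h$, giving $\limsup\le\epsilon_h$.

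The main obstacle is the contraction step, i.e., justifying the positive constant $c$. It requires quantitatively linking the greedy, capacity-based local selection and the reachability projection to a decrease of the global $\mathcal{W}_2$. I would first try to prove it under the assumption that the residual mass lies within a uniform reachable distance. If that fails, I would absorb the remaining deficiency into $\epsilon_h$ rather than into $c$.
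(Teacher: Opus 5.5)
\textbf{The failing step: bookkeeping of the non-contractive terms.} The gap is larger than the contraction step you flagged. You claim every non-contractive term is $O((k+1)^{-2})$, and that is false.

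\begin{itemize}
\item The noise trace $\operatorname{tr}(C_i\Sigma_{i,w}C_i^\top+\Sigma_{i,v})$, the spread $C_h$ and the projection distance inside $\mathbb{E}[\mathcal{D}^{k+1}]$ are $O(1)$. They enter with weight $\tfrac{1}{k+2}$, i.e.\ at order $1/k$.
\item $\mathbb{E}[\mathcal{D}^{k+1}]$ is bounded below by a positive noise-induced constant. So your bound $\mathbb{E}[\mathcal{D}^{k+1}]\le(1-c)\,\mathbb{E}[\mathcal{W}_2^2(\rho^k,\nu)]+O(1/k)$ fails once $\mathbb{E}[\mathcal{W}_2^2(\rho^k,\nu)]$ drops below that level.
\item For $e^{k+1}$: rerouting mass of order $\tfrac{1}{k+2}$ over distances up to the diameter costs mass times squared distance, i.e.\ $O(\mathrm{diam}^2/k)$. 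The mass is not squared.
\end{itemize}

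Write $a_k:=\mathbb{E}[\mathcal{W}_2^2(\rho^k,\nu)]$. The terms above are of the same order as $\tfrac{c}{k+1}a_k$, so even granting $c>0$ you only get
\[
a_{k+1}\le\Big(1-\tfrac{c}{k+1}\Big)a_k+\tfrac{\bar\epsilon}{k+1}+\tfrac{C}{(k+1)^2},
\]
hence $\limsup_k a_k\le\bar\epsilon/c$, not \eqref{eq:convergence}.

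Your closing paragraph is inconsistent with your own target. Any nonnegative sequence obeying \eqref{eq:convergence} with $c>0$ tends to $0$, which leaves no room for a persistent residual.

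No sharper estimate can fix this, because $a_k\to 0$ is false here:
\begin{itemize}
\item Every atom of $\rho^k$ must be transported into $\{q_j\}$, so $\mathcal{W}_2^2(\rho^k,\nu)\ge\frac{1}{(k+1)n_a}\sum_{t,i}\mathrm{dist}(y_i^t,\{q_j\})^2$.
\item For $t\ge 1$, $y_i^t$ given $\mathcal{F}_{t-1}$ is Gaussian with covariance $C_i\Sigma_{i,w}C_i^\top+\Sigma_{i,v}$.
\item When this covariance is nonsingular (as in the simulations), $\mathbb{E}[\mathrm{dist}(y_i^t,\{q_j\})^2]\ge\delta_0>0$ uniformly, so $a_k\ge\tfrac{k}{k+1}\delta_0$.
\end{itemize}
So \eqref{eq:convergence} with constants $c,C>0$ cannot hold in the noisy setting. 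The defensible target is the displayed recurrence with the $\tfrac{\bar\epsilon}{k+1}$ term, which is exactly what produces the theorem's $\epsilon_h$.

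\textbf{The contraction step, and comparison with the paper.} The contraction $c>0$ is asserted (``should then yield''), not derived. Your fallback of moving the deficiency into $\epsilon_h$ does not help: with $c=0$ your convexity inequality gives only the diameter-scale bound $\limsup_k a_k\le\sup_t\mathbb{E}[\mathcal{D}^t]+O(\mathrm{diam}^2)$, which is essentially trivial.

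The paper does not derive the contraction either. Its proof:
\begin{itemize}
\item expands $\mathcal{W}_2^2$ via an asserted $L$-smoothness along $\rho^{k+1}=\tfrac{k}{k+1}\rho^k+\tfrac{1}{k+1}\delta_{y_i^{k+1}}$;
\item \emph{postulates} the drift condition $\mathbb{E}[\langle\nabla\mathcal{W}_2^2,\delta_{y_i^{k+1}}-\rho^k\rangle\mid\mathcal{F}_k]\le-\tilde c\,\mathcal{W}_2^2(\rho^k,\nu)+\epsilon_{proj}$;
\item sets $c=2\tilde c>1$ and invokes Chung's lemma.
\end{itemize}
Its $\epsilon_{proj}$ likewise generates a $\tfrac{2\epsilon_{proj}}{k+1}$ term, which it absorbs into $\tfrac{C}{(k+1)^2}$ without justification. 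This is the same issue as above.

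To reach what the paper actually supports, state that drift inequality as an explicit hypothesis and keep the $O(1/k)$ residual. Your skeleton is more rigorous than the paper's smoothness expansion: the correct recursion $\rho^{k+1}=\tfrac{k+1}{k+2}\rho^k+\tfrac{1}{k+2}\sigma^{k+1}$ and joint convexity of $\mathcal{W}_2^2$ under mixtures. But neither route extracts $c>0$ from the capacity-based selection.
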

\vspace{-.15in}

\begin{proof}
At each step \(k\), the MPC solves the finite-horizon problem \eqref{eq:d2oc_optimization_prob}, which is strictly convex under box input constraints (Theorem \ref{theorem:W_min_opt_con}), applying only the first input before re-optimizing. Deviations arise from: (i) stochastic noise, and (ii) dynamic local weight updates.

Let the mean reachable output set at \(k+h\) be \(C_i \mathcal{M}_i^{k+h}\), and \(\bar{q}_i^{k+h}\) be the target barycenter selected from target samples.

Under decentralized communication, agents update sample weights \(\beta_{i,j}^k\) by taking the minimum of local and received weights within communication range. This conservative update aligns barycenters across neighbors, reducing redundant coverage and enhancing distributional alignment with \(\nu\).

Then, the error is defined for \(h = r, \ldots, H + r - 1\) by
$
e_i^{k+h} := y_i^{k+h} - \bar{q}_i^{k+h} = C_i x_i^{k+h} - \bar{q}_i^{k+h}.
$
By triangle inequality, we have
\[
\|e_i^{k+h}\| \leq \|C_i x_i^{k+h} - \tilde{q}_i^{k+h}\| + \|\tilde{q}_i^{k+h} - \bar{q}_i^{k+h}\|,
\]
where \(\tilde{q}_i^{k+h}\) is the projection of \(\bar{q}_i^{k+h}\) onto the mean reachable output set \(C_i \mathcal{M}_i^{k+h}\).

(i) Projection error: If \(\bar{q}_i^{k+h} \in C_i \mathcal{M}_i^{k+h}\), then \(\|\tilde{q}_i^{k+h} - \bar{q}_i^{k+h}\|=0\); otherwise, it is bounded by some \(\epsilon\).
\vspace{-.05in}

(ii) Tracking error: Writing \(x_i^{k+h} = \hat{x}_i^{k+h} + \xi_i^{k+h}\) with nominal \(\hat{x}_i^{k+h}\) and noise \(\xi_i^{k+h}\), we have
\vspace{-.05in}
\[
\|C_i x_i^{k+h} - \tilde{q}_i^{k+h}\| \leq \|C_i \hat{x}_i^{k+h} - \tilde{q}_i^{k+h}\| + \|C_i \xi_i^{k+h}\|.
\]
\vspace{-.05in}
By construction, \(\tilde{q}_i^{k+h} \in C_i \mathcal{M}_i^{k+h}\), so the nominal output \(C_i \hat{x}_i^{k+h}\) can be steered arbitrarily close to \(\tilde{q}_i^{k+h}\) under the given dynamics. For the stochastic part, \(\mathbb{E}[\|C_i \xi_i^{k+h}\|^2] \leq \lambda_{\max}(C_i \Sigma_\xi C_i^\top)\), where \(\Sigma_\xi = \sum_{\ell=0}^{h-1} A_i^{h-1-\ell} \Sigma_w (A_i^{h-1-\ell})^\top\), and \(\lambda_{\max}(\cdot)\) denotes the largest eigenvalue of a symmetric positive semi-definite matrix. With finite \(H\), \(\Sigma_\xi\) is bounded under Assumption~\ref{assump:stability}, ensuring uniform boundedness of \(\mathbb{E}[\|e_i^{k+h}\|]\) with a bound depending on \(H\).
\vspace{-.05in}

{
(iii) Wasserstein descent: The bounded tracking error established in (i) and (ii) ensures a dissipative drift. Under limited communication, the decentralized min-weight consensus mitigates overestimation of coverage weights. By reducing variance in $\rho^k$, this mechanism promotes consistent alignment with the target distribution $\nu$.

To derive the recurrence relation in \eqref{eq:convergence}, we utilize the update law $\rho^{k+1} = \frac{k}{k+1}\rho^k + \frac{1}{k+1}\delta_{y_i^{k+1}}$. By leveraging the $L$-smoothness of the squared Wasserstein distance in the sense of Fr\'{e}chet derivatives \cite{gelbrich1990formula,gupta2021convergence}, $\mathbb{E}[\mathcal{W}_2^2(\rho^{k+1}, \nu)]$ is expanded as:
\begin{align}
    \mathcal{W}_2^2(\rho^{k+1}, \nu) &\leq \mathcal{W}_2^2(\rho^k, \nu) + \tfrac{2}{k+1} \langle \nabla \mathcal{W}_2^2(\rho^k, \nu), \delta_{y_i^{k+1}} - \rho^k \rangle \nonumber \\
    &\quad + \tfrac{L}{(k+1)^2} \|\delta_{y_i^{k+1}} - \rho^k\|^2 \label{eq:expansion_compressed}
\end{align}

By taking the total expectation on both sides of \eqref{eq:expansion_compressed} and applying the dissipative drift property 
$\mathbb{E}[\langle \nabla \mathcal{W}_2^2, \delta_{y_i^{k+1}} - \rho^k \rangle \mid \mathcal{F}_k] \leq -\tilde{c} \mathcal{W}_2^2(\rho^k, \nu) + \epsilon_{proj}$, 
the recurrence relation in \eqref{eq:convergence} is directly obtained. Here, $\mathcal{F}_k$ is the filtration up to time $k$, $\tilde{c} > 0$ is the contraction gain, and $\epsilon_{proj} \geq 0$ denotes the residual error bound from control projection and discretization.
By defining $c := 2\tilde{c} > 1$ and a constant $C > 0$ that bounds the second-order expansion and projection terms, applying Chung's Lemma \cite{chung1954stochastic} directly yields an $O(1/k)$ convergence rate, specifically $\mathbb{E}[\mathcal{W}_2^2(\rho^k, \nu)] \leq \frac{C}{c-1}k^{-1} + o(k^{-1})$. This ensures that
$
    \lim_{k \to \infty} \mathbb{E}[\mathcal{W}_2^2(\rho^k, \nu)] \leq \epsilon_h,
$
where the residual error $\epsilon_h \geq 0$ is determined by the communication frequency, the intensity of process/measurement noises, and projection errors associated with physical constraints.
}
\end{proof}
\vspace{-.15in}

This convergence in expected squared Wasserstein distance implies that the \textit{mean squared error} between the empirical output distribution and the reference distribution vanishes asymptotically up to \(\epsilon_h\).
\vspace{-.05in}

%%%%%%%%%%%%%%%%%%%%%%%%%%%%%%%%%%%%%%%%%%%%%%%%%%%%%%%
\section{Simulation Results}
\vspace{-0.116in}

To validate the proposed method, simulation results are presented in this section. Three linearized quadrotors are considered for the multi-agent platform. The state of agent \(i\) at time step \(k\) is defined by
\(x_i^k := [\mathrm{x}_i^k, \dot{\mathrm{x}}_i^k, \mathrm{y}_i^k, \dot{\mathrm{y}}_i^k, \mathrm{z}_i^k, \dot{\mathrm{z}}_i^k, \phi_i^{k}, \dot{\phi}_i^{k}, \theta_i^{k}, \dot{\theta}_i^{k}, \psi_i^k, \dot{\psi}_i^k]^\top,\) 
where \(\mathrm{x}_i^k\), \(\mathrm{y}_i^k\), and \(\mathrm{z}_i^k\) denote position coordinates, and \(\phi_i^k\), \(\theta_i^k\), and \(\psi_i^k\) represent roll, pitch, and yaw angles, respectively. 
Dotted symbols indicate the corresponding translational or angular velocities in discrete time. 
The four control inputs for agent \(i\) are \(u_{i,1}^k = \tau_{i,\phi}^k\), \(u_{i,2}^k = \tau_{i,\theta}^k\), \(u_{i,3}^k = \tau_{i,\psi}^k\), and \(u_{i,4}^k = T_i^k\), 
corresponding to torques for roll, pitch, yaw, and total thrust. 
Each input is box-constrained to reflect motor torque and thrust limits.
{and the communication range for decentralized weight-sharing is set to 5.}
\vspace{-0.1in}

{The discretized quadrotor model exhibits an output relative degree of $r = 4$, satisfying $C_i A_i^{\ell-1} B_i = 0$ for $\ell = 1, 2, 3$. This implies that control inputs at time $k$ begin to affect the system output only at time $k+4$. To evaluate the robustness of the proposed D$^2$OC under severe uncertainty, substantial stochasticity is incorporated with process noise $w_i^k \sim \mathcal{N}(0, 0.2 \times \mathbf{I}_{12})$ and measurement noise $v_i^k \sim \mathcal{N}(0, 0.5 \times \mathbf{I}_3)$. Furthermore, the initial states are randomized as $x_i^0 = \bar{x}_i^0 + \delta_i$ with $\delta_i \sim \mathcal{N}(0, 4 \times \mathbf{I}_{12})$ to represent initial spatial uncertainty in every simulation run. With the prediction horizon $H = 1$, the optimal control input is obtained using \texttt{quadprog} in \textsc{MATLAB} by solving the quadratic program defined in \eqref{eq:H_i and f_i}. When the target barycenter is outside the reachable set, it is projected to the closest feasible output as in \eqref{eq:q_proj}.}

\begin{figure}[!h]
    \centering
    \subfloat[Conventional D$^2$C]{
    \includegraphics[trim=70 0 70 20,width=0.475\linewidth]{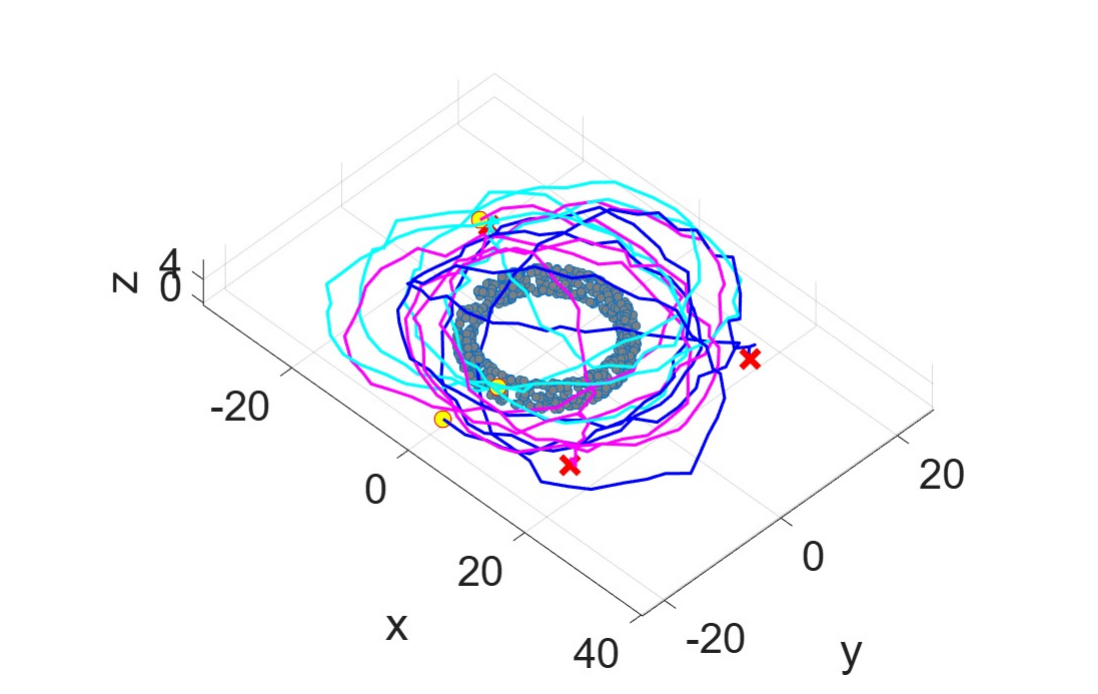}}\;
    \subfloat[Proposed D$^2$OC]{
    \includegraphics[trim=70 0 70 20,width=0.475\linewidth]{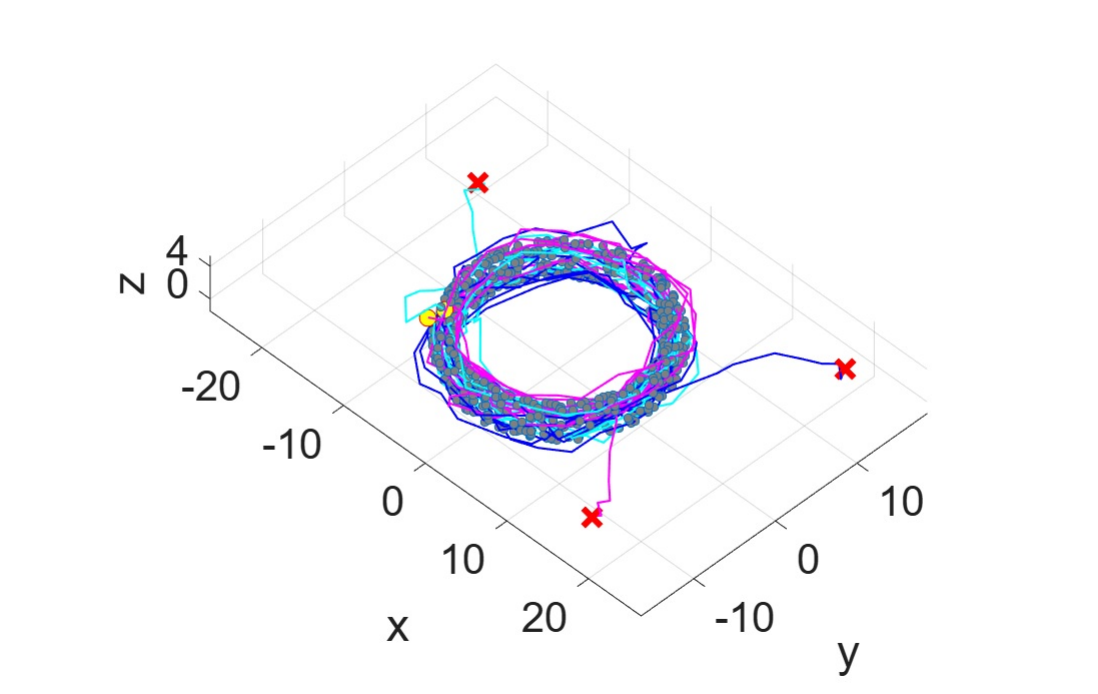}}
    \caption{{Comparison of 3D quadrotor trajectories under (a) Conventional D$^2$C and (b) Proposed Stochastic D$^2$OC. Solid lines represent agent paths, while `$\times$' and `$\circ$' markers denote initial and final positions, respectively.}}
    \label{fig:sim trajectory}
\end{figure}

{The comparative performance between the conventional D$^2$C approach \cite{kabir2021wildlife,lee2022density} and the proposed method is illustrated in Fig.~\ref{fig:sim trajectory}. In the conventional approach (Fig.~\ref{fig:sim trajectory}(a)), a goal point is greedily selected from sample points closest to the agent and tracked via a standard PID-type controller. This method fails to accurately reconstruct the torus-shaped distribution (gray dots), with trajectories drifting toward the outer boundaries due to the high sensitivity of the greedy logic to injected noise. In contrast, the proposed Stochastic D$^2$OC in Fig.~\ref{fig:sim trajectory}(b) demonstrates superior performance, precisely matching the reference distribution despite severe noise and initial state uncertainties.}

\begin{figure}[!h]
    \centering
    \subfloat[Communication Events]{
    \includegraphics[width=0.45\linewidth]{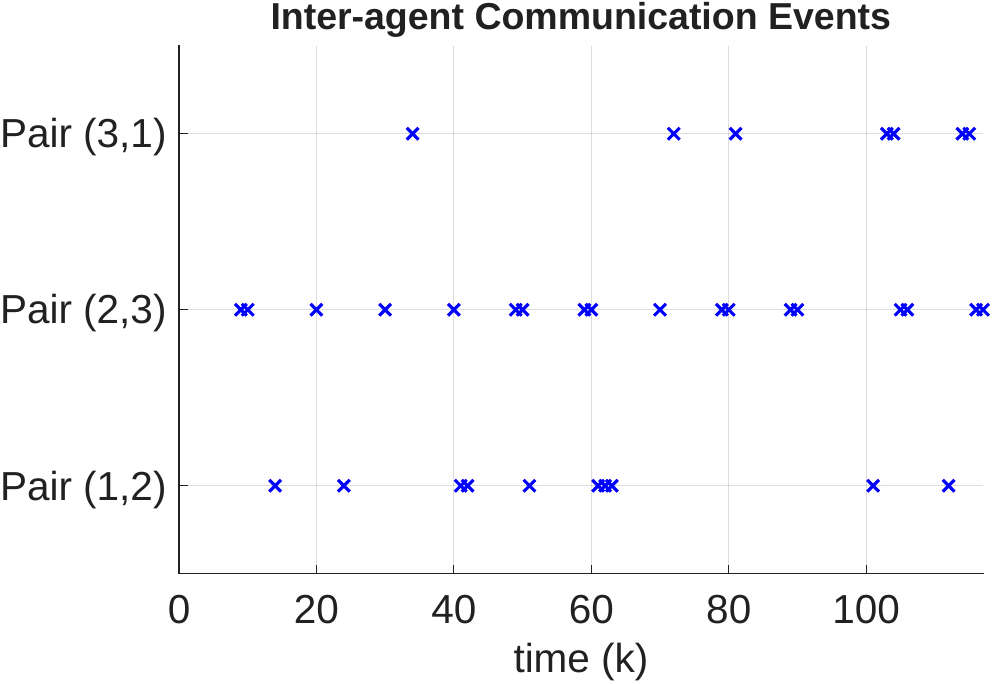}}\;\;
    \subfloat[Expected $W_2^2$ distance]{
    \includegraphics[width=0.45\linewidth]{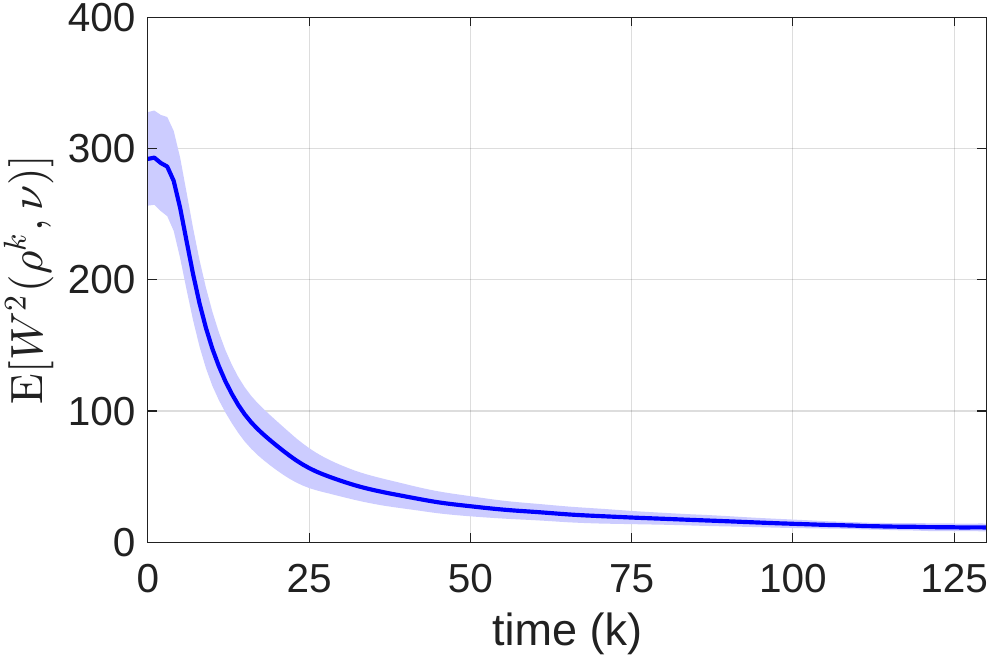}}
    \caption{{Performance analysis of the proposed Stochastic D$^2$OC. (a) Communication events between agents. (b) Empirical mean (solid line) and standard deviation (shaded region) of $W_2^2$ over 100 simulation runs.}}
    \label{fig:sim result}
\end{figure}

{To further evaluate the performance in decentralized and stochastic environments, Fig.~\ref{fig:sim result} provides a detailed analysis. Fig.~\ref{fig:sim result}(a) shows intermittent communication events, confirming that D$^2$OC achieves time-averaged distribution matching even with occasional weight-sharing. The statistical results in Fig.~\ref{fig:sim result}(b) show that the empirical mean (solid blue line) asymptotically converges toward a bounded region $\epsilon_h$, consistent with Theorem~\ref{thm:convergence_upper}. Notably, the narrow standard deviation (shaded region) demonstrates the high robustness of the proposed method, ensuring consistent performance despite the presence of large process/measurement noises and injected initial state perturbations across 100 simulation runs.}

%%%%%%%%%%%%%%%%%%%%%%%%%%%%%%%%%%%%%%%%%%%%%%%%%%%%%%%
\section{Conclusion}
\vspace{-0.116in}

This paper introduced Density-Driven Optimal Control, a novel framework for solving the multi-agent non-uniform area coverage problem under realistic operational constraints. Unlike traditional uniform coverage strategies that are inefficient or infeasible in large-scale or resource-limited settings, D$^2$OC leverages optimal transport theory to derive control laws that steer agents toward matching a task-specific reference distribution. The framework accounts for stochastic LTI system dynamics, agent constraints, and limited communication, providing a unified and principled solution. Theoretical guarantees on convergence were established via reachability analysis, and simulation results demonstrated the effectiveness and practicality of the proposed approach. 
\vspace{-0.075in}

\small
\bibliographystyle{unsrt}
\bibliography{references}

\end{document}